\documentclass[11pt]{amsart}
\usepackage[T1]{fontenc}
\usepackage[utf8]{inputenc}
\usepackage{lmodern}
\usepackage{microtype}
\usepackage{xcolor}
\usepackage{amsmath,amssymb,amsthm,mathtools}
\usepackage{enumitem}
\usepackage{needspace}
\usepackage[a4paper,margin=32mm]{geometry}
\usepackage[hidelinks]{hyperref}
\hypersetup{
pdftitle={Irrational Seshadri constants from dihedral orbits},
pdfauthor={Antonio Laface and Luca Ugaglia}
}
\usepackage{tikz}

\newtheorem{maintheorem}{Theorem}
\newtheorem{theorem}{Theorem}[section]
\newtheorem{proposition}[theorem]{Proposition}
\newtheorem{lemma}[theorem]{Lemma}
\newtheorem{corollary}[theorem]{Corollary}
\theoremstyle{remark}

\newcommand{\PP}{\mathbb P}
\newcommand{\CC}{\mathbb C}
\newcommand{\RR}{\mathbb R}
\newcommand{\ZZ}{\mathbb Z}
\newcommand{\OO}{\mathcal O}
\newcommand{\eps}{\varepsilon}
\newcommand{\Bl}{\operatorname{Bl}}
\newcommand{\Pic}{\operatorname{Pic}}
\newcommand{\Sym}{\operatorname{Sym}}

\numberwithin{equation}{section}
\setlist[enumerate,1]{label=\textup{(\roman*)},leftmargin=*,itemsep=3pt}
\title[Irrational Seshadri constants]{Irrational Seshadri constants from dihedral orbits}
\author[A.~Laface]{Antonio Laface}
\address{
Departamento de Matem\'atica,
Universidad de Concepci\'on,
Casilla 160-C,
Concepci\'on, Chile}
\email{alaface@udec.cl}

\author[L.~Ugaglia]{Luca Ugaglia}
\address{
Dipartimento di Matematica e Informatica,
Universit\`a degli studi di Palermo,
Via Archirafi 34,
90123 Palermo, Italy}
\email{luca.ugaglia@unipa.it}
\date{}
\subjclass[2020]{14C20, 14E05, 14J17, 14D15}
\keywords{Seshadri constants, de Jonqui\`eres transformations, dihedral groups, quotient surfaces}
\date{}
\begin{document}
\begin{abstract}
We construct an irrational one-point Seshadri constant on the
blow-up $X_9$ of $\PP^2$ at nine very general points. The divisor
$L=9H-3(E_1+\cdots+E_5)-2(E_6+\cdots+E_9)$ is ample and satisfies
$\eps(L;x)=2\sqrt5$ at a very general point $x\in X_9$.
To prove this, we establish that the Seshadri constant of
$\OO_{\PP^1\times\PP^1}(1,1)$ at ten very general points is
$1/\sqrt5$, completing the reflection approach proposed by Dionne
and Roth for the ten-point case.
We also prove that the same equality holds at a very general
free orbit of a fixed dihedral group of order ten. This yields
an irrational one-point Seshadri constant on the singular quotient
surface. A plane model of its minimal resolution and a deformation
of the blow-up centers then give the result on $X_9$.
\end{abstract}
\maketitle
\section*{Introduction}
The rationality of one-point Seshadri constants is a longstanding
problem in the local positivity of line bundles
\cite[Remark~5.1.13]{Lazarsfeld}. For an ample line bundle on a
smooth projective surface, a Seshadri constant strictly below the
volume bound is computed by a curve and is therefore rational.
An irrational example must consequently attain the volume bound,
with a nonsquare self-intersection. This makes the question closely
related to the problem of determining nef cones of blown-up surfaces.

Conditional results link this problem to conjectures on plane linear
systems. Dumnicki, K\"uronya, Maclean, and Szemberg show that the
Segre--Harbourne--Gimigliano--Hirschowitz conjecture implies the
existence of irrational one-point Seshadri constants on suitable
blow-ups of the plane \cite{DumnickiKuronyaMacleanSzemberg}.
Hanumanthu and Harbourne obtain conditional examples under the
weaker hypothesis that every irreducible curve of negative
self-intersection on the further blow-up at the evaluation point
is a $(-1)$-curve \cite[Theorem~2.4]{HanumanthuHarbourne}.
Nine points are the first possible case for plane blow-ups:
one-point Seshadri constants on blow-ups at at most eight points
are rational \cite[Remark~2.3]{HanumanthuHarbourne}.
Analogous conditional constructions on blow-ups of ruled surfaces
have since been obtained by Hanumanthu, Jacob, Suhas, and Singh,
assuming a conjectural description of the negative curves
\cite{HanumanthuJacobSuhasSingh}.

A different approach comes from Dionne and Roth's study of
multipoint Seshadri constants on $\PP^1\times\PP^1$.
Their reflection method \cite[Theorem~8.1]{DionneRoth} uses
Petrakiev's multiplicity estimate for a divisor along a curve
\cite[Lemma~2.1 and Corollary~2.2]{Petrakiev}.
In \cite[Section~9.2]{DionneRoth}, they explain how to obtain
the value $1/\sqrt5$ for ten points, provided a certain limiting
class remains nef after specializing the points to a curve of
bidegree $(3,1)$. Establishing nefness on this specialization
is the step left open in their argument.

We complete this approach for ten points and obtain an irrational
one-point Seshadri constant on the blow-up of the plane at nine
very general points. The key additional feature is that the ten
points on $\PP^1\times\PP^1$ can be constrained to form an orbit
of a fixed dihedral group. Explicit de Jonqui\`eres transformations
give a nef isotropic class on a special orbit; a normal-bundle
calculation then allows the reflection argument to be carried out
within the family of orbits of the same group. The resulting
multipoint equality descends to a one-point equality on the
singular quotient. We identify a plane model of its minimal
resolution, deform the blow-up centers, and apply two quadratic
transformations to obtain an explicit ample divisor on the blow-up
of nine very general plane points.

We now state the results precisely. All varieties are over $\CC$;
``very general'' means outside a countable union of proper closed
subsets of the relevant irreducible parameter space. For an ample
line bundle $L$ on a normal projective surface and distinct smooth
points $p_1,\ldots,p_r$, write
\[
\eps(L;p_1,\ldots,p_r)
=\sup\bigl\{c\geq0:\pi^*L-c(E_1+\cdots+E_r)
\text{ is nef}\bigr\},
\]
where $\pi$ blows up the points and $E_i$ is the exceptional curve
over $p_i$. The volume bound is $\eps\leq\sqrt{L^2/r}$.
Set $\mathbb F_0=\PP^1\times\PP^1$. Fix a primitive fifth root
$\zeta$ and a real number $u>1$, and let $G$ be generated by
\begin{equation}\label{eq:action}
r(x,z)=(\zeta x,\zeta^3z),\qquad
s(x,z)=(u/x,u^3/z).
\end{equation}
Here $x,z$ are affine coordinates on the two factors. These
automorphisms extend to $\mathbb F_0$ and generate a dihedral
group of order ten.
\begin{maintheorem}\label{thm:general-points}
The Seshadri constant of $\OO_{\mathbb F_0}(1,1)$ at a very
general free $G$-orbit is $1/\sqrt5$. In particular, the same
equality holds at ten very general points of $\mathbb F_0$.
\end{maintheorem}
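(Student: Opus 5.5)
The upper bound $\eps\le 1/\sqrt5$ is the volume bound, since $(1,1)^2=2$ and $r=10$. For the lower bound I will prove that the isotropic class
\[
D_\infty = \sqrt5\,(H_1+H_2) - (E_1+\cdots+E_{10}), \qquad D_\infty^2 = 10-10=0,
\]
is nef on the blow-up $Y$ of $\mathbb F_0$ at a very general free $G$-orbit. Here $H_1,H_2$ are the pullbacks of the rulings.

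The plan follows the reflection method of Dionne and Roth \cite[Theorem~8.1]{DionneRoth}. The idea is to produce classes $D_k=a_k(H_1+H_2)-b_k\sum E_i$ with $b_k/a_k\to 1/\sqrt5$ from below. Each $D_k$ is obtained from the previous one by a reflection in a $(-2)$- or $(-1)$-type class of the lattice $\Pic(Y)^G$, and I will show each is nef or nearly nef at a very general orbit. Nefness of the limit then follows because the nef cone is closed. Restricting to $G$-orbits is harmless for the final statement. A general free orbit specializes into the family of ten general points, and nefness at a special configuration implies nefness at very general points by semicontinuity. The second sentence of the theorem will therefore follow from the first.

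The key steps, in order, are as follows.

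First, I will work out the $G$-action concretely. The invariant sections of $\OO(a,b)$ and the quotient $\mathbb F_0/G$ will let me compute the invariant sublattice $\langle H_1,H_2,E=\sum E_i\rangle$ and its intersection form.

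Second, I will find a special free orbit on which an explicit nef isotropic class exists. One candidate is an orbit lying on a $G$-invariant curve of bidegree $(3,1)$, which is the specialization suggested in \cite[Section~9.2]{DionneRoth}. On such an orbit I will apply explicit de Jonqui\`eres transformations, which commute with $G$ or normalize it, to move $D_\infty$, or a rational approximant of it, to a visibly nef class such as a fibre class or a pencil class of an elliptic fibration. This step verifies nefness on the special orbit.

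Third, I will use a normal-bundle computation to spread nefness from the special orbit to the very general orbit. Let $C$ be the curve containing the special orbit. I will compute the normal bundle of its strict transform inside the equivariant deformation family of orbits. The goal is to show that the curves which could obstruct nefness do not deform out of the special fibre, or that the relevant classes have sections whose multiplicities are controlled by Petrakiev's lemma \cite[Lemma~2.1 and Corollary~2.2]{Petrakiev}. The latter gives the inequality $\mathrm{mult}_C(D)\ge$ a bound that drives the Dionne--Roth reflection recursion. I expect this step to be the main obstacle, because the general-point version of this argument is exactly the step left open in \cite{DionneRoth}. Working inside the one-parameter-reduced family of $G$-orbits is what should make it tractable: the equivariant normal bundle is smaller and the obstruction should vanish.

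Finally, I will conclude. A curve $\Gamma$ with $D_\infty\cdot\Gamma<0$ on the very general blow-up would specialize to an effective class with negative intersection on the special blow-up. This contradicts the nefness established there, once the reflection argument has excluded the finitely many candidate classes through the recursion. Hence $\eps=1/\sqrt5$ at a very general free $G$-orbit, and by specialization also at ten very general points.
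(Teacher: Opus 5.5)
Your outline names the right ingredients (a special orbit on a $G$-invariant $(3,1)$-curve, de Jonqui\`eres maps, a normal-bundle computation in the family of orbits, Petrakiev's multiplicity bound), but the way you assemble them contains a step that fails. In your second step you propose to verify, on the special orbit, nefness of $D_\infty=\sqrt5(H_1+H_2)-E$ or of a rational approximant of it. This cannot be done. All ten points lie on $\Gamma$, whose strict transform $\Gamma_0$ is an irreducible curve of class $\gamma=3H_1+H_2-E$, and
\[
\bigl(a(H_1+H_2)-bE\bigr)\cdot\gamma=4a-10b<0\qquad\text{whenever } b/a>2/5 .
\]
In particular $D_\infty\cdot\gamma=4\sqrt5-10<0$, and the Seshadri constant at that orbit is at most $2/5<1/\sqrt5$. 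Your final step inherits the problem. It derives a contradiction from ``the nefness established there'', but a limit of curves with negative $D_\infty$-degree can simply contain $\Gamma_0$, so nothing is contradicted. Indeed, if $D_\infty$ were nef on some special fibre, semicontinuity alone would finish and no reflection would be needed. The proposed recursion of reflections in $(-1)$- or $(-2)$-classes also does not correspond to anything available here.

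The missing idea is that the class which is nef on the special fibre is \emph{asymmetric}. Pencils of $(5,1)$-curves through the ten points give isomorphisms $X_j\to X_{j+1}$ between blow-ups at a sequence of different dihedral configurations. These maps stay admissible along the whole iteration, and all of them act on $\langle H_1,H_2,E\rangle$ by the same matrix $T$. The nef classes $T^j(H_2)$ converge projectively to the dominant eigenvector $R=\frac{5-\sqrt5}{2}H_1+\frac{5+\sqrt5}{2}H_2-E$. This class is isotropic, nef on $X_0$, and satisfies $R\cdot\gamma=\sqrt5>0$. Nefness on the $(3,1)$-specialization is exactly what Dionne and Roth left open, so this is the crux, not the deformation step alone.

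One then needs a \emph{single} reflection in the $(-4)$-class $\gamma$. For the family $G\cdot(1+t,1)$, one computes $N_{\Gamma_0/\mathcal X}\simeq\OO_{\PP^1}(-2)^{\oplus2}$: this comes from ten elementary modifications of $\OO_{\PP^1}(6)\oplus\OO_{\PP^1}$ together with $H^0(N(1))=0$. Petrakiev's argument then shows that the limit of a curve of class $c$ contains $\Gamma_0$ with multiplicity $\mu\geq -c\cdot\gamma/2$. Hence $c+\frac{c\cdot\gamma}{2}\gamma$ is effective on $X_0$, and $s_\gamma(R)\cdot c=R\cdot s_\gamma(c)\geq0$. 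Since $s_\gamma(R)=R+\frac{\sqrt5}{2}\gamma=\bigl(\frac52+\sqrt5\bigr)\bigl(H_1+H_2-E/\sqrt5\bigr)$, this is how the symmetric isotropic class is reached on very general orbits. Without the class $R$ and this explicit reflection with $k=2$, your steps do not connect.
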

The orbit assertion makes it possible to descend the polarization
$\OO_{\mathbb F_0}(10,10)$ to the quotient $Y=\mathbb F_0/G$.
The resulting ample Cartier divisor has square $20$ and one-point
Seshadri constant $2\sqrt5$ at a very general smooth point of $Y$;
this is Corollary~\ref{thm:quotient}. Resolving $Y$ and deforming
the centers of a plane model gives our main result.
\begin{maintheorem}\label{thm:plane-nine-points}
Let $p_1,\ldots,p_9$ be very general points of $\PP^2$, and let
$X_9=\Bl_{p_1,\ldots,p_9}\PP^2$. Denote by $H$ the pullback of the
class of a line and by $E_i$ the exceptional curve over $p_i$.
Then the divisor $L=9H-3(E_1+\cdots+E_5)-2(E_6+\cdots+E_9)$
is ample and satisfies
\[
\eps(L;x)=2\sqrt5
\]
at a very general point $x\in X_9$. 
\end{maintheorem}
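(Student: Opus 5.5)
The plan is to deduce Theorem~\ref{thm:plane-nine-points} from the quotient statement, Corollary~\ref{thm:quotient}, in the way sketched in the introduction. First check numerics: $L^2=81-5\cdot9-4\cdot4=20$, so the volume bound is $\eps(L;x)\le\sqrt{20}=2\sqrt5$, and only the lower bound needs proof.

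On $Y=\mathbb F_0/G$ the descended polarization $M$ has $M^2=20$ and $\eps(M;y)=2\sqrt5$ at a very general smooth point $y$. The singularities of $Y$ are cyclic quotient singularities coming from points with nontrivial stabilizer: the fixed points of $r$ (the four points $(0,0),(0,\infty),(\infty,0),(\infty,\infty)$, which form two $s$-orbits and give two points of type $\frac15(1,3)$ or its variant) and the fixed points of the involutions (which give $A_1$ points). I will compute the minimal resolution $\widetilde Y\to Y$ and its chains of exceptional curves. Pulling back does not change the local computation at a smooth point, so $\eps(\mu^*M;y)=2\sqrt5$ with $\mu^*M$ nef and big on $\widetilde Y$.

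Next I will find a plane model. I expect $\widetilde Y$ to be rational, since $Y$ is a quotient of $\mathbb F_0$. The aim is a birational morphism $\widetilde Y\to\PP^2$ realizing $\widetilde Y$ as a blow-up of $\PP^2$ at nine points in special (infinitely near or collinear) position, with the exceptional curves of $\mu$ becoming $(-2)$-curves and other negative curves in $\Pic\widetilde Y\cong\ZZ^{10}$. In this basis, $\mu^*M$ should be a class $dH-\sum m_iE_i$ that two quadratic transformations carry to $9H-3(E_1+\cdots+E_5)-2(E_6+\cdots+E_9)$. Here the Cremona action is just the Weyl group action on $\ZZ^{1,9}$.

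Then comes the deformation. Consider the family of nine-point blow-ups $X_t$ degenerating to $X_0=\widetilde Y$, together with a section $x_t$ specializing to $y$. Nefness of $\pi^*L-c E_x$ for $c<2\sqrt5$ is open in the opposite direction to what we need, so the argument goes through the semicontinuity of Seshadri constants in families. The Seshadri constant at a very general point of a very general fiber is at least its value at a special fiber: if a curve $C_t$ with $C_t\cdot L/\mathrm{mult}_x C_t<2\sqrt5$ existed generally, then its flat limit on $X_0$, which is an effective class with multiplicity at least $\mathrm{mult}$ at $y$, would contradict $\eps(\mu^*M;y)=2\sqrt5$. A countable union of Hilbert schemes handles "very general". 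Ampleness of $L$ on $X_9$ follows from Nakai--Moishezon. We have $L^2>0$. The intersections $L\cdot E_i$ are positive. For other curves, use that $L$ is in the Weyl orbit of a class on $X_0$ positive on all curves except the contracted $(-2)$-curves, and on a very general $X_9$ those $(-2)$-classes are not effective. Alternatively, check $L\cdot C>0$ for $(-1)$-curves and use $-K\cdot C$ arguments: we have $L=3(-K)+(\dots)$ with $-K$ nef on $X_9$.

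The main obstacle is the explicit identification of $\widetilde Y$ with a degenerate nine-point plane blow-up, and the tracking of $\mu^*M$ through this model and the two quadratic maps to land exactly on $L$. I would first compute $K_Y$ and the singularity types from the fixed-point data. Then I would find $G$-invariant pencils (for instance the images of the rulings and of the invariant curves $xz$-type) to produce the conic bundle structure, and contract $(-1)$-curves down to $\PP^2$.
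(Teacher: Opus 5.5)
Your outline has the same architecture as the paper's proof: descend to $Y$, pass to the minimal resolution $Z$, realize $Z$ as a nine-fold blow-up of $\PP^2$, specialize, apply two quadratic transformations, then Nakai--Moishezon. Your flat-limit argument, using upper semicontinuity of multiplicity, is an acceptable replacement for the paper's semicontinuity argument on the space of ten-point blow-ups.

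The gap is that the one step tying $A$ to the specific divisor $L$ is never carried out. You assert that $\nu^*A$ ``should be'' a class that two quadratic transformations carry to $L$, but that is reading the answer off the statement. Numerics cannot replace this step. What you get for free is $M^2=20$ and $K_Z\cdot M=K_Y\cdot A=-4$ (since $K_S=q^*K_Y$). These do not determine the Cremona orbit: $10H-4E_1-3(E_2+\cdots+E_8)-E_9$ has the same invariants. Both it and $L$ satisfy $m_1\ge\cdots\ge m_9$ and $d\ge m_1+m_2+m_3$, i.e.\ they lie in the fundamental chamber of the Weyl group, so they are not equivalent. Your sketch of the model is also inaccurate: the two $\frac15(1,2)$ points resolve to chains $(-3,-2)$, so the $\nu$-exceptional curves are not all $(-2)$-curves.

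What is missing is the explicit model. Work in the coordinates where $r(x,z)=(\zeta x,\zeta^2z)$ and $s(x,z)=(1/x,1/z)$.
\begin{enumerate}
\item The invariant $x^5+x^{-5}$ of the first ruling gives a conic bundle $Z\to\PP^1$. Its reducible fibers are chains $(-2,-1,-2)$ over $\pm2$, and a chain $(-3,-2,-1,-3,-2)$ with multiplicities $(1,3,5,2,1)$ over $\infty$.
\item The invariant curve $\{z=x^2\}$ gives a section $C$ with $C^2=\frac25-\frac12-\frac12-\frac25=-1$.
\item Contracting eight fiber components disjoint from $C$, starting in each fiber from the $(-1)$-curve, gives $\mathbb F_1$. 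Contracting the image of $C$ then gives $\PP^2$.
\item In this plane marking, the conditions $M\cdot C=3$, $M\cdot F=10$ and $M\cdot N=0$ for the eight $\nu$-exceptional curves $N$ force $M=13h-3e_1-5(e_2+\cdots+e_5)-4(e_6+e_7)-2(e_8+e_9)$.
\item Only then do the quadratic transformations $(13;5^4,4^2,3,2^2)\to(11;5,4^2,3^4,2^2)\to(9;3^5,2^4)$ produce $L$.
\end{enumerate}

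For ampleness, your first option needs repair because of the $(-3)$-curves and their combinations. Your second option works once the filler is made explicit: $L=3(-K_X)+E_6+E_7+E_8+E_9$ with $-K_X$ nef. An irreducible curve $\Theta$ with $L\cdot\Theta=0$ would have to satisfy $-K_X\cdot\Theta=0$ and $E_i\cdot\Theta=0$ for $i\ge6$. Such a $\Theta$ would be pulled back from a curve of anticanonical degree zero on the degree-four del Pezzo surface $\Bl_{p_1,\ldots,p_5}\PP^2$, which is impossible. This is the paper's decomposition $L=2(-K_X)+\eta^*(-K_{S_5})$ in another form.
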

Thus irrational one-point Seshadri constants occur for ample line
bundles on smooth rational surfaces of Picard number ten.
Section~\ref{sec:proof-theorem-one} proves
Theorem~\ref{thm:general-points}, including the construction of the
nef class and the reflection within the orbit family.
Section~\ref{sec:proof-theorem-two} proves
Theorem~\ref{thm:plane-nine-points} through the quotient, its
plane model, and the passage from special blow-up centers to
very general plane points.
\subsection*{Acknowledgements}
The authors were partially supported by Proyecto FONDECYT Regular
No.~1230287.
The second author is member of
INdAM - GNSAGA.
\section{Proof of Theorem~\ref{thm:general-points}}\label{sec:proof-theorem-one}
We construct a nef class on a special $G$-orbit and then apply a
reflection in a family of free orbits of the fixed group $G$.

\subsection{A nef class on a special orbit}\label{sec:nef}
Write $H_1,H_2$ for the pulled-back ruling classes on a blow-up
of $\mathbb F_0$, and $E$ for the sum of its ten exceptional
curves. For $w\in\{2,3\}$, let
$r_w(x,z)=(\zeta x,\zeta^wz)$, and for $c,d\in\CC^*$ put
\[
Z_w(c,d)
=\langle r_w\rangle\cdot(1,1)
 \,\cup\,\langle r_w\rangle\cdot(c,d).
\]
Assume $c^5\ne1$ and $d\notin\{c^w,c^{w-5}\}$.
The pullback action of $r_w$ on
$H^0(\mathbb F_0,\OO_{\mathbb F_0}(5,1))$
has exactly two three-dimensional weight spaces, of weights
$0$ and $w$. In affine coordinates, these are
$\langle 1,x^5,x^{5-w}z\rangle$ and
$\langle x^w,z,x^5z\rangle$, respectively; the remaining
weight spaces have dimension two.

In each of the two three-dimensional spaces, vanishing at
$(1,1)$ and $(c,d)$ imposes two independent linear conditions,
since $c^5\ne1$. These conditions determine a section up to
scalar, and the eigenvector property ensures that it vanishes
on all of $Z_w(c,d)$. We choose the following generators,
understood after bihomogenization:
\begin{align*}
\sigma_0&=c^5-dc^{5-w}+(dc^{5-w}-1)x^5
          +(1-c^5)x^{5-w}z,\\
\sigma_1&=d(c^5-1)x^w+(c^w-dc^5)z
          +(d-c^w)x^5z.
\end{align*}
\begin{lemma}\label{lem:birational}
The birational map
\[
\psi:\mathbb F_0\dashrightarrow\mathbb F_0,
\qquad
(x,z)\longmapsto
\left(x,\frac{\sigma_1(x,z)}{d\,\sigma_0(x,z)}\right)
\]
lifts to an isomorphism $\widetilde\psi:X\to X'$, where
$X=\Bl_{Z_w(c,d)}\mathbb F_0$,
$X'=\Bl_{Z_w(c,d^*)}\mathbb F_0$, and
$d^*=c^{2w-5}/d$.
Let $H_1,H_2$ and $H_1',H_2'$ be the pulled-back ruling
classes on $X$ and $X'$, respectively.
Label the exceptional classes $E_i$ and $E_i'$ so that
corresponding centers have the same first coordinate,
and put $E=\sum_{i=1}^{10}E_i$. Then
\[
\widetilde\psi^*H_1'=H_1,\qquad
\widetilde\psi^*H_2'=5H_1+H_2-E,\qquad
\widetilde\psi^*E_i'=H_1-E_i.
\]
\end{lemma}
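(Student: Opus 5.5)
The map $\psi$ preserves the first projection. The plan is to realize it as the morphism defined by the pencil $\langle\sigma_0,\sigma_1\rangle\subset|\OO_{\mathbb F_0}(5,1)|$ together with the first ruling. Since $(5,1)^2=10=|Z_w(c,d)|$, the pencil should have base locus exactly $Z_w(c,d)$, with each point a simple base point.

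\textbf{Step 1: the base locus is reduced.} Write $\sigma_j=a_j(x)+b_j(x)z$ in the affine chart, and set $\Delta(x)=a_0b_1-a_1b_0$, which bihomogenizes to a form of degree $10$ in $x$. For fixed $x$, the restriction of the pencil to the fibre $\{x\}\times\PP^1$ is the Möbius map $z\mapsto\sigma_1/(d\sigma_0)$. This map is nondegenerate exactly when $\Delta(x)\neq0$.
\begin{enumerate}
\item The two sections have no common zero on a fibre with $\Delta(x)\ne0$.
\item The ten points of $Z_w(c,d)$ have pairwise distinct first coordinates $\zeta^k$ and $c\zeta^k$, because $c^5\ne1$. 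They are common zeros of $\sigma_0$ and $\sigma_1$, so $\Delta$ vanishes at these ten values and hence has exactly these roots, all simple.
\item On each such fibre, the pair $(\sigma_0,\sigma_1)$ restricts to sections of $\OO(1)$ that are proportional but not both zero. Equivalently, the pencil has no fixed fibre component.
\end{enumerate}
For (iii) I check the lines $x=1$ and $x=c$ directly and propagate by $r_w$-equivariance. For instance, on $x=1$ one finds $\sigma_0=(c^5-1)(1-z)$ and $\sigma_1=d(c^5-1)(1-z)$. The analogous check on $x=c$ is where the hypothesis $d\notin\{c^w,c^{w-5}\}$ enters: it prevents both restrictions from vanishing identically. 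Also $\sigma_0$ and $\sigma_1$ share no horizontal component, since each has $z$-degree $1$ with non-proportional coefficient polynomials. Hence the pencil has ten simple base points, namely $Z_w(c,d)$. I expect this step to be the main computational obstacle.

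\textbf{Step 2: the morphism from $X$.} On $X$ the class $5H_1+H_2-E$ is therefore base-point free, and with $H_1$ it defines a morphism $\Phi:X\to\mathbb F_0$ extending $\psi$. Since $H_1\cdot(5H_1+H_2-E)=1$, $\Phi$ is birational. Over a point $x$ with $\Delta(x)\ne0$, $\Phi$ is an isomorphism on the fibre. Over a root $x_i$, the fibre splits as $(H_1-E_i)+E_i$. The component $E_i$ maps isomorphically onto the target fibre, since $E_i\cdot(5H_1+H_2-E)=1$. The component $H_1-E_i$ is contracted, since $(H_1-E_i)\cdot(5H_1+H_2-E)=1-1=0$. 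So $\Phi$ contracts exactly the ten disjoint $(-1)$-curves $H_1-E_i$, and no other curve. By Castelnuovo, $\Phi$ is the blow-up of $\mathbb F_0$ at their ten image points.

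\textbf{Step 3: identifying the image points.} The image of $H_1-E_i$ is the point $(x_i,\sigma_1/(d\sigma_0))$, where the ratio is evaluated on the line $x=x_i$ after cancelling the common linear factor. For $x_i=1$ the computation above gives $z=1$. For $x_i=c$ the analogous cancellation gives $z=c^{2w-5}/d=d^*$. Since $\sigma_0$ has weight $0$ and $\sigma_1$ has weight $w$, we have $\psi\circ r_w=r_w\circ\psi$, so the image set is $r_w$-stable. It is therefore exactly $Z_w(c,d^*)$, and $\Phi=\widetilde\psi$ is an isomorphism $X\to X'$. One should also note that $(c,d^*)$ again satisfies the standing assumptions, so that $Z_w(c,d^*)$ consists of ten distinct points.

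\textbf{Step 4: the class formulas.} By construction, $\widetilde\psi^*H_1'=H_1$ and $\widetilde\psi^*H_2'=5H_1+H_2-E$. The exceptional curve $E_i'$ over the image point with first coordinate $x_i$ is the image of $H_1-E_i$, so $\widetilde\psi^*E_i'=H_1-E_i$, matching the labelling convention in the statement. As a consistency check, these formulas preserve the intersection form; for example, $(5H_1+H_2-E)^2=10-10=0$.
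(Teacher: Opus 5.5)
Your route is the paper's: resolve the pencil $\langle\sigma_0,\sigma_1\rangle\subset|\OO_{\mathbb F_0}(5,1)|$, find the degenerate fibres from the coefficient determinant $\Delta$, contract the ten curves of class $H_1-E_i$, and identify their images on $x=1$ and $x=c$ via $r_w$-equivariance. The gap is at the one place where the hypothesis $d\notin\{c^w,c^{w-5}\}$ is really needed, and you put the hypothesis in the wrong place.

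On $x=c$ one gets $\sigma_0(c,z)=c^{5-w}(c^5-1)(d-z)$ and $\sigma_1(c,z)=c^{w}(c^5-1)(d-z)$. These are nonzero for every $d$, so that check uses only $c^5\neq1$. The hypothesis is what makes the coefficient polynomials non-proportional, i.e.\ $\Delta\not\equiv0$. You only assert this, and your step (ii) (that $\Delta$ has exactly the ten given roots, all simple) presupposes it. It is not automatic. For $d=c^w$ both sections are multiples of $z-x^w$, and for $d=c^{w-5}$ both are multiples of $1-x^{5-w}z$. In either case the pencil has a fixed component through all ten points, and $\psi$ collapses $\mathbb F_0$ onto a curve.

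The repair is one line. Since $a_1b_0=-d(c^5-1)^2x^5$ lives in degree five, the $x^{10}$-coefficient of $\Delta$ is that of $a_0b_1$, namely $(dc^{5-w}-1)(d-c^w)$. This is nonzero exactly under the hypothesis. The paper computes $\Delta=-(c^w-d)(c^{5-w}d-1)(x^5-1)(x^5-c^5)$ outright, which gives nonvanishing, the roots, and their simplicity at once.

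With this inserted, the rest of your argument coincides with the paper's:
\begin{itemize}
\item rank one on the ten special fibres;
\item resolution of the pencil by the blow-up;
\item birationality from $H_1\cdot(5H_1+H_2-E)=1$;
\item contraction of exactly the curves $H_1-E_i$;
\item identification of the image as $Z_w(c,d^*)$.
\end{itemize}
Your rank-one check by direct restriction plus equivariance is a harmless variant of the paper's remark that $B_0=(1-c^5)x_0^wx_1^{5-w}$ does not vanish at the roots.
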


\begin{proof}
The sections $\sigma_0,\sigma_1$ constructed above vanish
on $Z_w(c,d)$.
Use homogeneous coordinates $[x_0:x_1]$ and $[z_0:z_1]$
on the two factors, with $x=x_1/x_0$ and $z=z_1/z_0$.
Write their bihomogenizations as
$\sigma_i=A_i(x_0,x_1)z_0+B_i(x_0,x_1)z_1$.
The coefficient determinant is
\[
A_0B_1-A_1B_0
=-(c^w-d)(c^{5-w}d-1)
(x_1^5-x_0^5)(x_1^5-c^5x_0^5).
\]
Under our assumptions, this has ten simple zeros,
at $x=\zeta^\ell$ and $x=c\zeta^\ell$ for
$0\le\ell<5$.
At each zero the coefficient matrix has rank one,
since $B_0=(1-c^5)x_0^wx_1^{5-w}$ is nonzero;
elsewhere it is invertible.
Thus the two sections have no common component.
Since $(5H_1+H_2)^2=10$, their base scheme consists
precisely of the ten reduced points of $Z_w(c,d)$.
Blowing up these points resolves the pencil and gives
a birational morphism $\beta:X\to\mathbb F_0$ preserving
the first projection.
Away from the ten fibers containing the base points,
the coefficient matrix is invertible, so the induced
map on each fiber is an automorphism.
On each of the ten remaining fibers, the matrix has
rank one, so its strict transform is contracted.
Each exceptional curve $E_i$ maps isomorphically onto
the corresponding target ruling fiber, since the base
points are simple.
Hence the exceptional locus of $\beta$ consists exactly
of the ten disjoint $(-1)$-curves of classes $H_1-E_i$.
To identify their images, restrict the two sections to
$x=1$ and $x=c$. Away from the base points, one obtains
\[
\frac{\sigma_1(1,z)}{\sigma_0(1,z)}=d,
\qquad
\frac{\sigma_1(c,z)}{\sigma_0(c,z)}=c^{2w-5}.
\]
The factor $1/d$ in the definition of $\psi$
therefore normalizes the first image to $(1,1)$,
while the second is $(c,d^*)$.
The map is equivariant under
$r_w(x,z)=(\zeta x,\zeta^wz)$, since $\sigma_0$ and
$\sigma_1$ have weights $0$ and $w$.
The other contracted fibers consequently map to the
remaining points of $Z_w(c,d^*)$.
Contracting the ten disjoint $(-1)$-curves gives a smooth
surface through which $\beta$ factors.
The induced birational morphism from this surface to
$\mathbb F_0$ contracts no curves and is therefore an
isomorphism.
Thus $\beta$ is the blow-up of the ten points
$Z_w(c,d^*)$, giving the required isomorphism
$\widetilde\psi:X\to X'$.
Finally, the first target ruling pulls back to $H_1$,
and the second pulls back to the pencil class
$5H_1+H_2-E$.
The exceptional curve $E_i'$ pulls back to the strict
transform of the first-ruling fiber through the
corresponding source point, whose class is $H_1-E_i$.
This proves the pullback formulas.
\end{proof}
Let $\tau(x,z)=(z,x)$ be the factor exchange.
Exchanging the two target factors gives inverse base configuration
$Z_{w^{-1}}(c^{2w-5}/d,c)$, so the parameters change by
\begin{equation}\label{eq:step}
(c,d,w)\longmapsto(c^{2w-5}/d,c,w^{-1}),
\end{equation}
where $w^{-1}$ is taken modulo five. 
\begin{lemma}\label{lem:admissible-iteration}
Fix $u>1$, and define $(c_j,d_j,w_j)$ recursively by
\eqref{eq:step}, starting from $(c_0,d_0,w_0)=(u^3,u,2)$.
Then the weights alternate between $2$ and $3$, and
$c_j^5\ne1$ and $d_j\notin\{c_j^{w_j},c_j^{w_j-5}\}$
for every $j\ge0$.
\end{lemma}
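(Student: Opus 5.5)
The plan is to reduce everything to integer exponents. Every parameter $c_j,d_j$ is an integer power of the real number $u>1$, so each condition in the statement becomes an inequality between integers, and a simple sign-and-size invariant settles all of them.

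\emph{The recursion in exponents.} By \eqref{eq:step} the weight $w_j$ is replaced by its inverse modulo five. Since $2\cdot3\equiv1 \pmod 5$, the map $w\mapsto w^{-1}$ swaps $2$ and $3$, so starting from $w_0=2$ the weights alternate: $w_j=2$ for even $j$ and $w_j=3$ for odd $j$. The exponent $2w-5$ equals $-1$ when $w=2$ and $1$ when $w=3$. Hence the step reads
\[
(c,d)\mapsto(c^{-1}d^{-1},c) \text{ if } w=2,\qquad (c,d)\mapsto(c\,d^{-1},c) \text{ if } w=3.
\]
Starting from $(u^3,u)$, induction gives $c_j=u^{a_j}$ and $d_j=u^{b_j}$ with integers $a_j,b_j$. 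Their recursion is
\[
(a,b)\mapsto(-a-b,a) \text{ if } w=2,\qquad (a,b)\mapsto(a-b,a) \text{ if } w=3,
\]
with $(a_0,b_0)=(3,1)$. The first values are $(3,1),(-4,3),(-7,-4),(11,-7),(18,11),\dots$.

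\emph{Translating the conditions.} Because $u>1$ is real, $u^m=u^n$ holds exactly when $m=n$. So $c_j^5\ne1$ becomes $a_j\neq0$, and $d_j\notin\{c_j^{w_j},c_j^{w_j-5}\}$ becomes $b_j\neq w_ja_j$ and $b_j\neq(w_j-5)a_j$. Concretely, for $w_j=2$ we must avoid $b_j\in\{2a_j,-3a_j\}$, and for $w_j=3$ we must avoid $b_j\in\{3a_j,-2a_j\}$. In both cases it suffices to prove the single strict inequality $0<|b_j|<|a_j|$, because each forbidden value has absolute value at least $2|a_j|$.

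\emph{The invariant.} I would prove by induction on even $j$ that $a_j$ and $b_j$ are nonzero, have the same sign, and satisfy $|a_j|>|b_j|$. This holds for $(3,1)$.

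Given such an even-index state $(a,b)$, the next (odd) state is $(-a-b,a)$. Since $a$ and $b$ have the same sign, $|-a-b|=|a|+|b|>|a|>0$, which gives the required bound at the odd step.

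Applying the $w=3$ rule to $(-a-b,a)$ gives the following even state $(-2a-b,-a-b)$. Its two entries again share a sign, namely the sign opposite to that of $a$. Moreover $|2a+b|=2|a|+|b|>|a|+|b|=|a+b|>0$. This restores the invariant, and the inequality $0<|b_j|<|a_j|$ then holds at every step.

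The main obstacle is only choosing this invariant. It is the right one because the two-step map is the matrix $\begin{pmatrix}-2&-1\\-1&-1\end{pmatrix}$, which is hyperbolic and preserves the cone of same-sign vectors lying below the diagonal. Once the invariant is in hand, each verification is a one-line absolute-value estimate.
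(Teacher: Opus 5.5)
Your proof is correct and follows the paper's argument: write $c_j=u^{a_j}$, $d_j=u^{b_j}$, maintain a sign-and-size invariant giving $|a_j|>|b_j|>0$, and conclude because the forbidden exponents $w_ja_j$ and $(w_j-5)a_j$ have absolute value at least $2|a_j|$. The only difference is cosmetic: you run the induction two steps at a time from even indices, while the paper tracks the sign pattern (equal at even indices, opposite at odd ones) at every index via $|a_{j+1}|=|a_j|+|b_j|$ and $|b_{j+1}|=|a_j|$.
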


\begin{proof}
Since $2$ and $3$ are inverse modulo five, the weights alternate
as asserted. Write $(c_j,d_j)=(u^{a_j},u^{b_j})$.
The recurrence \eqref{eq:step} sends $(a,b)$ to
$(-a-b,a)$ at even steps and to $(a-b,a)$ at odd steps.
Starting from $(a_0,b_0)=(3,1)$, we claim that
$|a_j|>|b_j|>0$, with equal signs at even indices and
opposite signs at odd indices. Indeed, in either case the next
pair has the required signs and satisfies
$|a_{j+1}|=|a_j|+|b_j|$ and $|b_{j+1}|=|a_j|$.
This proves the claim by induction.
Since $u>1$ and $a_j\ne0$, we have $c_j^5\ne1$.
Moreover, $|b_j|<|a_j|$, whereas both $|w_j|$ and
$|w_j-5|$ are at least two. Thus
$b_j\notin\{w_ja_j,(w_j-5)a_j\}$, which gives
$d_j\notin\{c_j^{w_j},c_j^{w_j-5}\}$.
\end{proof}

We start with the ten-point configuration
\[
P_0:=\tau(Z_2(u^3,u))=Z_3(u,u^3)=G\cdot(1,1).
\]
These points lie on the smooth rational curve
$\Gamma=\{z=x^3\}$ of bidegree $(3,1)$.
Using the parameter recurrence \eqref{eq:step}, we define
the subsequent configurations inductively by
\[
P_{j+1}
:=\tau\bigl(Z_{w_{j+1}}(c_{j+1},d_{j+1})\bigr)
=\tau\bigl(Z_{w_j^{-1}}(c_{j+1},c_j)\bigr)
=Z_{w_j}(c_j,c_{j+1}).
\]
For every $j\ge0$, set
\[
X_j:=\Bl_{P_j}\mathbb F_0.
\]

By Lemma~\ref{lem:admissible-iteration}, the construction of
Lemma~\ref{lem:birational} applies to every triple
$(c_j,d_j,w_j)$. Denote the resulting birational map by
$\psi_j:
\mathbb F_0\dashrightarrow\mathbb F_0$, and write
$\sigma_{0,j},\sigma_{1,j}$ for its defining sections.
Its base points are $Z_{w_j}(c_j,d_j)=\tau(P_j)$,
and the base points of the inverse are
$Z_{w_j}(c_j,c_j^{2w_j-5}/d_j)
=Z_{w_j}(c_j,c_{j+1})=P_{j+1}$.
Consider the birational map
\[
F_j=\psi_j\circ\tau:\mathbb F_0\dashrightarrow\mathbb F_0,
\qquad
F_j(x,z)=
\left(z,\frac{\sigma_{1,j}(z,x)}
{d_j\,\sigma_{0,j}(z,x)}\right).
\]
The map $F_j$ induces a birational morphism
$X_j\to\mathbb F_0$ contracting precisely the ten disjoint
$(-1)$-curves given by the strict transforms of the fibers
of the second projection through $P_j$.
Their images are the ten points of $P_{j+1}$, so this
morphism is the blow-up of $P_{j+1}$ and hence lifts to an
isomorphism $f_j:X_j\to X_{j+1}$.
Each step therefore first exchanges the source factors and
then applies $\psi_j$. The pencil defining the second
coordinate of $F_j$ has bidegree $(1,5)$.
Using the ruling classes and the sum of the exceptional
classes to mark each $X_j$, every pullback $f_j^*$ has
matrix
\begin{equation}\label{eq:T}
T=\begin{pmatrix*}[r]
0&1&0\\
1&5&10\\
0&-1&-1
\end{pmatrix*}
\end{equation}
on the symmetric marked space $\langle H_1,H_2,E\rangle$.
This is Dionne and Roth's operator
$T_{10}$, see~\cite{DionneRoth}.

\begin{proposition}\label{prop:nef}
On $X_0$, the class
$R=\alpha H_1+\beta H_2-E$ is nef, where
$\alpha=(5-\sqrt5)/2$ and $\beta=(5+\sqrt5)/2$.
\end{proposition}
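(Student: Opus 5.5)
The idea is to show that $R$ is an isotropic eigenvector of $T$, and then to transport a hypothetical $R$-negative curve along the chain of isomorphisms $f_j:X_j\to X_{j+1}$ until its class can no longer be effective.

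\emph{Step 1: eigenvectors of $T$.} Write classes in the marked basis as coordinate vectors $(a,b,e)$ for $aH_1+bH_2+eE$, so that $R=(\alpha,\beta,-1)$. A direct check with \eqref{eq:T} gives
\[
T(2,2,-1)=(2,2,-1),\qquad TR=\lambda R,\qquad \lambda=\tfrac{3+\sqrt5}{2}=\beta-1 .
\]
By the symmetry of the construction, the conjugate vector $R'=(\beta,\alpha,-1)$ satisfies $TR'=\lambda^{-1}R'$. Moreover
\[
R^2=R'^2=2\alpha\beta-10=0,\qquad R\cdot R'>0,
\]
and $-K=2H_1+2H_2-E$ is orthogonal to both $R$ and $R'$. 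Hence $\{R,R',-K\}$ is a basis of the symmetric marked space, and $T$ is diagonal in it with eigenvalues $\lambda,\lambda^{-1},1$.

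\emph{Step 2: reduction to a symmetric effective class.} Suppose $C\subset X_0$ is an irreducible curve with $R\cdot C<0$. Since $P_0=G\cdot(1,1)$ is a free $G$-orbit, $G$ lifts to $X_0$ and permutes $E_1,\dots,E_{10}$ transitively. Set $D=\sum_{g\in G}g_*C$. Then $D$ is effective, its class $aH_1+bH_2-mE$ lies in the symmetric space, and $R\cdot D=10\,R\cdot C<0$ because $R$ is $G$-invariant.

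Expand $D=pR+qR'+s(-K)$. Then $R\cdot D=q\,R\cdot R'$, so $q<0$.

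\emph{Step 3: transport along the chain.} The composition $\Phi_n=f_{n-1}\circ\cdots\circ f_0:X_0\to X_n$ is an isomorphism, so $(\Phi_n)_*D$ is effective on $X_n$. Lemma~\ref{lem:birational} gives $f_j^*E'=\sum(H_1-E_i)$, which is symmetric, so $f_j^*$ preserves symmetric classes with matrix $T$. Consequently $(\Phi_n)_*D$ is symmetric and has coordinates $T^{-n}D$. In the eigenbasis,
\[
T^{-n}D=p\lambda^{-n}R+q\lambda^{n}R'+s(-K).
\]
Since $q<0$ and $\lambda>1$, the $H_1$-coefficient of this class is
\[
p\lambda^{-n}\alpha+q\lambda^n\beta+2s,
\]
which tends to $-\infty$. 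But that coefficient equals $(\Phi_n)_*D\cdot H_2$. This is nonnegative for an effective class, because $H_2$ is nef (it is the pullback of a base-point-free class). This is a contradiction, so $R$ is nef.

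The main point to verify carefully is Step 3, which has two parts. First, one must confirm that the pushforward really acts by $T^{-1}$ rather than $T$, i.e.\ that the direction is chosen so that the $R'$-component expands; if not, one uses the isomorphisms $f_j^{-1}$ in the opposite direction instead. Second, Lemma~\ref{lem:admissible-iteration} must guarantee that every $f_j$ is defined, so that the chain can be iterated for all $n$.
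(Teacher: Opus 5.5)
Your proof is correct and is essentially the paper's argument in dual form. Since $(\Phi_n)_*D\cdot H_2=D\cdot\Phi_n^*H_2=D\cdot T^n(H_2)$, you pair $D$ with the same nef classes $T^n(H_2)$ that the paper rescales by $\alpha\lambda^{-n}$ to exhibit $R$ as a limit of nef classes. Your direction check, that $(\Phi_n)_*$ acts by $T^{-n}$, is right. The $G$-symmetrization is harmless but unnecessary, because $\Phi_n^*H_2$ already lies in $\langle H_1,H_2,E\rangle$ and can be paired directly with any curve $C$. One small slip: for $f_j$ one has $f_j^*E'=10H_2-E$; the formula $\sum(H_1-E_i)$ is $\widetilde\psi_j^*E'$.
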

\begin{proof}
The classes $\xi_j=T^j(H_2)$ are pullbacks of rulings under the
composites of the $f_j$, hence nef. Put $\lambda=(3+\sqrt5)/2$,
$\overline R=\beta H_1+\alpha H_2-E$, and $K=-2H_1-2H_2+E$.
Direct calculation gives $T(R)=\lambda R$,
$T(\overline R)=\lambda^{-1}\overline R$, $T(K)=K$, and
$H_2=(\beta/5)R+(\alpha/5)\overline R+K$. Thus
$\alpha\lambda^{-j}\xi_j\to R$. The nef cone is closed.
\end{proof}
\subsection{Reflection within the orbit family}\label{sec:reflection}
In a family of blow-ups of ${\mathbb F_0}$ at labeled disjoint sections, the
ruling and exceptional divisors identify the Picard groups of all
geometric fibers by their coefficients. We call this the marking.
A marked real class that is nef on one fiber, or on the geometric
generic fiber, is nef on very general fibers. Indeed, the relative
Hilbert schemes have countably many components proper over the
base, and intersections with the marking divisors are constant on
each component. Components parametrizing cycles of negative
intersection have proper closed images in either case.
We use the following form of Petrakiev's multiplicity argument;
compare~\cite[Lemma~2.1 and Corollary~2.2]{Petrakiev} and
\cite[Theorem~8.1]{DionneRoth}.
\begin{lemma}\label{lem:reflection}
Let $(B,0)$ be a smooth connected pointed complex curve, and let
$\mathcal X\to B$ be the family obtained by blowing up
$\mathbb F_0\times B$ along ten pairwise disjoint labeled sections.
Use the ruling and exceptional classes to identify the real
Picard spaces of its fibers. Suppose $X_0$ contains a smooth
rational curve $\Gamma_0$ of class $\gamma$ with
$\gamma^2=-2k$ and normal bundle
$\OO_{\PP^1}(-k)^{\oplus2}$ in $\mathcal X$, where $k\in\ZZ_{>0}$.
If $D\in\Pic(X_0)\otimes_{\ZZ}\RR$ is nef, then the marked class
\[
s_\gamma(D)=D+\frac{D\cdot\gamma}{k}\,\gamma
\]
is nef on very general fibers.
\end{lemma}
\begin{proof}
Let $c$ be an effective integral class on the geometric generic
fiber $X_{\bar\eta}$. We will prove that $s_\gamma(D)\cdot c\geq 0$.
We first construct an effective divisor of class $c$ on $X_0$.
The two ruling classes and the ten exceptional classes are
represented by global line bundles on $\mathcal X$, and their
restrictions form a basis of the Picard group of each fiber.
Thus $c$ determines a global line bundle $\mathcal L$ whose
restriction to $X_{\bar\eta}$ is effective.
Let $K=\CC(B)$ and let $\overline K$ be its algebraic closure.
Flat base change along $K\subset\overline K$ gives
$H^0(X_\eta,\mathcal L_\eta)\otimes_K\overline K
\simeq H^0(X_{\bar\eta},\mathcal L_{\bar\eta})\neq 0$.
We may therefore choose a nonzero section of $\mathcal L_\eta$
over $K$ and denote its effective zero divisor by $C_\eta$.

Let $\mathcal C$ be the closure of $C_\eta$ in $\mathcal X$,
with its multiplicities. It is Cartier because $\mathcal X$
is smooth, and it contains no component of $X_0$.
Consequently, $C_0:=\mathcal C|_{X_0}$ is an effective divisor.
Near $0$, the line bundles $\mathcal O_{\mathcal X}(\mathcal C)$
and $\mathcal L$ differ only by a multiple of the fiber $X_0$,
whose restriction to $X_0$ is trivial. Hence
$\mathcal O_{X_0}(C_0)\simeq\mathcal L|_{X_0}$,
so $C_0$ represents the specialized class $c$.
Let $\mathcal I$ be the ideal sheaf of $\Gamma_0$ in $\mathcal X$,
put $N=N_{\Gamma_0/\mathcal X}$, and write
$\mathcal L=\OO_{\mathcal X}(\mathcal C)$.
The divisor $\mathcal C$ is defined by a section
$\sigma\in H^0(\mathcal X,\mathcal L)$.
Its order of vanishing $\mu$ along $\Gamma_0$ is the largest
integer such that
$\sigma\in H^0(\mathcal X,\mathcal L\otimes\mathcal I^\mu)$.
Taking its image modulo $\mathcal I^{\mu+1}$ gives a nonzero
section
\[
\overline\sigma\in
H^0\!\left(
\Gamma_0,\,
\mathcal L|_{\Gamma_0}\otimes
\mathcal I^\mu/\mathcal I^{\mu+1}
\right).
\]
Since $\Gamma_0$ is regularly embedded in the smooth threefold,
one has $\mathcal I/\mathcal I^2\simeq N^\vee$ and
$\mathcal I^\mu/\mathcal I^{\mu+1}\simeq\Sym^\mu N^\vee$.
Thus $\overline\sigma$ defines a nonzero homomorphism
$\mathcal L^{-1}|_{\Gamma_0}\to\Sym^\mu N^\vee$.
Concretely, choose local normal coordinates $v,t$ with
$X_0=\{t=0\}$ and $\Gamma_0=\{v=t=0\}$.
A local equation of $\mathcal C$ belongs to $(v,t)^\mu$,
and its image modulo $(v,t)^{\mu+1}$ is its first nonzero
homogeneous term in the normal variables $v,t$.
The section $\overline\sigma$ is the global expression of
these local terms.
Now $\deg(\mathcal L|_{\Gamma_0})=c\cdot\gamma$ and
$N\simeq\OO_{\PP^1}(-k)^{\oplus2}$, so the preceding
homomorphism becomes
\[
\OO_{\PP^1}(-c\cdot\gamma)
\longrightarrow
\Sym^\mu N^\vee
\simeq\OO_{\PP^1}(k\mu)^{\oplus(\mu+1)}.
\]
At least one component is nonzero. Since a nonzero map
$\OO_{\PP^1}(a)\to\OO_{\PP^1}(b)$ requires $a\leq b$,
we obtain $-c\cdot\gamma\leq k\mu$.
Moreover, restricting a local equation in $(v,t)^\mu$ to
$t=0$ gives an equation divisible by $v^\mu$.
Hence $C_0$ contains $\Gamma_0$ with multiplicity at least
$\mu$, and $C_0-\mu\Gamma_0$ is effective.
Together with $\mu+(c\cdot\gamma)/k\geq0$, this shows that
$s_\gamma(c)$ is represented by the effective $\mathbb Q$-divisor
\[
(C_0-\mu\Gamma_0)
+\left(\mu+\frac{c\cdot\gamma}{k}\right)\Gamma_0.
\]
Self-adjointness of $s_\gamma$ with respect to the intersection
pairing and nefness of $D$ give
\[
s_\gamma(D)\cdot c
=D\cdot s_\gamma(c)\geq0.
\]
This proves nefness on the geometric generic fiber and hence
on very general fibers.
\end{proof}
Keep $G$ fixed and move $(1,1)$ to $p_t=(1+t,1)$. The orbit
$G\cdot p_t$ consists of the ten points
\begin{align*}
p_\ell(t)
&=r^\ell\cdot p_t
=\bigl(\zeta^\ell(1+t),\zeta^{3\ell}\bigr),\\
p_{\ell+5}(t)
&=r^\ell s\cdot p_t
=\left(\frac{u\zeta^\ell}{1+t},u^3\zeta^{3\ell}\right),
\qquad 0\leq\ell\leq4.
\end{align*}
Choose an open neighborhood $B\subset\mathbb A^1$ of $0$ on which
these points are defined and pairwise distinct, and let $\mathcal X$
be the blow-up of ${\mathbb F_0}\times B$ along the corresponding ten sections.
Write $\Gamma_0$ for the strict transform of
$\Gamma\times\{0\}$ in $\mathcal X$, where $\Gamma\subset\mathbb F_0$ is the smooth rational
curve given in affine coordinates by $z=x^3$.
\begin{lemma}\label{lem:normal}
Let $\mathcal X\to B$ be the blow-up of $\mathbb F_0\times B$
along the ten labeled sections chosen above, and let
$\Gamma_0\subset X_0$ be the smooth rational curve constructed
above in the special fiber. Then
\[
N_{\Gamma_0/\mathcal X}\simeq\OO_{\PP^1}(-2)^{\oplus2}.
\]
\end{lemma}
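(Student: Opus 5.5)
The plan is to compute $N_{\Gamma_0/\mathcal X}$ as an elementary modification of the normal bundle of $\Gamma\times\{0\}$ in $\mathbb F_0\times B$, and then to check a vanishing of sections. Before blowing up, $N_{\Gamma\times\{0\}/\mathbb F_0\times B}\simeq N_{\Gamma/\mathbb F_0}\oplus\OO_\Gamma\simeq\OO_{\PP^1}(6)\oplus\OO_{\PP^1}$, since $\Gamma$ has bidegree $(3,1)$ and $\Gamma^2=6$. Blowing up a section that meets $\Gamma\times\{0\}$ transversally in the threefold at a point $q_i=p_i(0)$ replaces the normal bundle by the subsheaf of local sections $(a,b)$ whose value at $q_i$ lies on the line spanned by the normal velocity of the section, namely $(v_i,1)$. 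Here $v_i\in N_{\Gamma,q_i}$ is the normal component of $\tfrac{d}{dt}p_i(t)|_{t=0}$. This is the standard local computation in coordinates adapted to the section. Hence
\[
N_{\Gamma_0/\mathcal X}=\{(a,b)\in\OO(6)\oplus\OO:\ a(q_i)=v_i\,b(q_i),\ 1\le i\le10\},
\]
which has degree $6-10=-4$, consistent with $\gamma^2=-4$. Since the rank is two and the degree is $-4$, it suffices to show $H^0(N_{\Gamma_0/\mathcal X}(1))=0$. This excludes every splitting $\OO(-2+e)\oplus\OO(-2-e)$ with $e>0$.

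Next I compute the velocities. I parametrize $\Gamma$ by $x$ and measure normal displacement by the function $z-x^3$, which trivializes $N_\Gamma$ on the affine chart. In that chart, sections of $N_\Gamma(1)\simeq\OO(7)$ become polynomials $a(x)$ of degree $\le7$, and sections of $\OO(1)$ become polynomials $b(x)$ of degree $\le1$. All ten points lie in the chart, so I must check only this compatibility of degrees with the bihomogenization.

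For $p_\ell(t)$ one has $z-x^3=\zeta^{3\ell}(1-(1+t)^3)$, so $v=-3\zeta^{3\ell}=-3x_\ell^3$. For $p_{\ell+5}(t)$ one has $z-x^3=u^3\zeta^{3\ell}(1-(1+t)^{-3})$, so $v=3u^3\zeta^{3\ell}=3x^3$ evaluated at $x=u\zeta^\ell$. The conditions therefore say that $a+3x^3b$ vanishes at the fifth roots of unity and $a-3x^3b$ vanishes at the roots of $x^5=u^5$. We can thus write
\[
a+3x^3b=(x^5-1)m,\qquad a-3x^3b=(x^5-u^5)n,
\]
with $\deg m,\deg n\le2$.

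Subtracting gives $6x^3b=(x^5-1)m-(x^5-u^5)n$. Comparing coefficients of $x^5,x^6,x^7$ yields $m=n$. The identity then becomes $6x^3b=(u^5-1)m$. The left side involves only $x^3,x^4$ and the right side has degree $\le2$, so both sides vanish. Since $u>1$ gives $u^5\ne1$, we get $m=b=0$ and hence $a=0$. Therefore $H^0(N(1))=0$ and $N_{\Gamma_0/\mathcal X}\simeq\OO_{\PP^1}(-2)^{\oplus2}$.

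I expect the main obstacle to be justifying the elementary-modification description rigorously. This requires checking that the sections are transverse to $\mathbb F_0\times\{0\}$, that each $q_i$ is a simple point of $\Gamma$, and that the normal velocity line is exactly the kernel direction retained by the blow-up. It also requires keeping the trivialization of $N_\Gamma$ consistent at infinity, so that the degree count $\deg a\le7$ is correct.
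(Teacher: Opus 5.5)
Your proposal is correct and follows the paper's proof essentially step for step. It uses the same elementary modification of $\OO_{\PP^1}(6)\oplus\OO_{\PP^1}$ with conditions $a(x_i)=v_i\,b(x_i)$, the same velocities $v_i=\mp3x_i^3$, and the same reduction of $H^0(N(1))=0$ to the two divisibility identities modulo $x^5-1$ and $x^5-u^5$. The local fact you flag as the main obstacle is settled in the paper by a direct chart computation, $(\xi,q,t)\mapsto(x_i+\xi+A(t),\,B(t)+\xi q,\,t)$, whose normal map $(h,g)\mapsto(\xi h+v_i g,\,g)$ yields exactly the subsheaf you describe.
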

\begin{proof}
We compute the changes in the normal bundle induced by blowing
up the labeled sections, following \cite[Section~3]{Petrakiev}.
Each section meeting the curve induces an elementary transform
of its normal bundle, with a one-dimensional quotient supported
at the intersection point and determined by the tangent
direction of the section. Since $\Gamma\simeq\PP^1$ has
self-intersection six in ${\mathbb F_0}$, its normal bundle is $\OO_{\PP^1}(6)$.
The parameter direction adds a trivial summand, so before blowing up
the normal bundle is $\OO_{\PP^1}(6)\oplus\OO_{\PP^1}$.
Let $P_1,\ldots,P_{10}$ be the ten points of $G\cdot(1,1)$,
which are the blow-up centers on the central fiber.
Write $P_i=(x_i,x_i^3)$, with the labeling
\[
x_{\ell+1}=\zeta^\ell,\qquad
x_{\ell+6}=u\zeta^\ell,\qquad 0\leq\ell\leq4.
\]
On the affine chart we use the normal coordinates $n=z-x^3,t$.
Near $P_i$, write the corresponding moving section as
$x=x_i+A(t)$, $n=B(t)$, where $A(0)=B(0)=0$, and put
$v_i=B'(0)$.
Its ideal is generated by
$\xi=x-x_i-A(t)$ and $m=n-B(t)$. In the blow-up chart $m=\xi q$,
the blow-down is
\[
(\xi,q,t)\longmapsto
\bigl(x_i+\xi+A(t),\,B(t)+\xi q,\,t\bigr).
\]
The strict transform of $\Gamma$ is $q=t=0$. Differentiating and
quotienting by the tangent direction to this curve gives the normal
map $(h,g)\mapsto(\xi h+v_i g,g)$.
Its image consists precisely of pairs $(f,g)$ with
$f-v_i g$ divisible by $\xi$, or equivalently
$f(x_i)=v_i g(x_i)$.
Thus, writing $N=N_{\Gamma_0/\mathcal X}$, we obtain
\[
0\longrightarrow N\longrightarrow
\OO_{\PP^1}(6)\oplus\OO_{\PP^1}
\longrightarrow\bigoplus_{i=1}^{10}\CC_{x_i}\longrightarrow0.
\]
At each point $x_i$, the quotient map imposes one linear
condition on the fiber of $\OO_{\PP^1}(6)\oplus\OO_{\PP^1}$.
Thus each elementary transform decreases the degree by one,
and the exact sequence gives $\deg N=6-10=-4$.
Here $x(t)$ and $z(t)$ denote the coordinates of a moving point,
and primes denote differentiation with respect to $t$.
For each of the first five sections,
$x(t)=\zeta^\ell(1+t)$, so
\[
x'(0)=\zeta^\ell=x(0).
\]
For each of the last five sections,
$x(t)=u\zeta^\ell/(1+t)$, and therefore
\[
x'(t)=-\frac{u\zeta^\ell}{(1+t)^2},
\qquad
x'(0)=-u\zeta^\ell=-x(0).
\]
In both cases the second coordinate is independent of $t$:
it is $\zeta^{3\ell}$ for the first five sections and
$u^3\zeta^{3\ell}$ for the last five. Hence $z'(0)=0$
along all ten sections.
Since $B(t)=z(t)-x(t)^3$, differentiation gives
$v_i=B'(0)=z'(0)-3x(0)^2x'(0)$.
Writing $x_i=x(0)$, we obtain $v_i=-3x_i^3$
when $x_i^5=1$, and $v_i=3x_i^3$ when $x_i^5=u^5$.
After twisting the preceding exact sequence by
$\OO_{\PP^1}(1)$, a global section of $N(1)$ is a section
of $\OO_{\PP^1}(7)\oplus\OO_{\PP^1}(1)$ whose image in
each of the ten point quotients vanishes.
On the affine chart, it is therefore represented by a single
pair of polynomials $f(x),g(x)$, with $\deg f\leq7$ and
$\deg g\leq1$, satisfying $f(x_i)=v_i g(x_i)$ for every $i$.
For the five points with $x_i^5=1$, these conditions read
$f(x_i)+3x_i^3g(x_i)=0$.
Thus the polynomial $f(x)+3x^3g(x)$ vanishes at all five
distinct roots of $x^5-1$. Consequently $x^5-1$ divides $f+3x^3g$.
Likewise, the conditions at the other five points say that
$f(x)-3x^3g(x)$ vanishes at all five distinct roots of
$x^5-u^5$, so it is divisible by $x^5-u^5$.
Hence the ten pointwise conditions are equivalent to the
two polynomial identities
\[
f+3x^3g=(x^5-1)a,\qquad
f-3x^3g=(x^5-u^5)b.
\]
Both polynomials on the left have degree at most seven.
The quotient polynomials therefore satisfy
$\deg a,\deg b\leq2$.
Subtracting, the coefficients in degrees five through seven force
$a=b$. Hence $6x^3g=(u^5-1)a$.
The left side has only degrees three and four, while the right side
has degree at most two. Thus $g=a=f=0$, proving $H^0(N(1))=0$.
By the splitting theorem,
$N\simeq\OO_{\PP^1}(r)\oplus\OO_{\PP^1}(s)$.
The vanishing gives $r,s\leq-2$, and their sum is $-4$;
therefore $r=s=-2$.
\end{proof}
\subsection{Conclusion of the proof}\label{sec:main-proofs}
We retain the marking and the family $\mathcal X\to B$ constructed
in Subsections~\ref{sec:nef} and~\ref{sec:reflection}.
\begin{proof}[Proof of Theorem~\ref{thm:general-points}]
On the central fiber $X_0=\Bl_{G\cdot(1,1)}\mathbb F_0$,
Proposition~\ref{prop:nef} gives the nef class
\[
R=\frac{5-\sqrt5}{2}H_1+\frac{5+\sqrt5}{2}H_2-E.
\]
The strict transform $\Gamma_0$ of $\Gamma=\{z=x^3\}$ has class
$\gamma=3H_1+H_2-E$, with $\gamma^2=-4$ and
$R\cdot\gamma=\sqrt5$. Lemmas~\ref{lem:reflection} and~\ref{lem:normal}
give the nefness of
\[
s_\gamma(R)=R+\frac{\sqrt5}{2}\gamma
=\left(\frac52+\sqrt5\right)
\left(H_1+H_2-\frac{E}{\sqrt5}\right)
\]
on a very general moving fiber. Nefness propagates from one such
fiber to the universal family of free $G$-orbits, and then to the
family of all ordered ten-tuples of distinct points of ${\mathbb F_0}$.
In each case the resulting lower bound is the volume bound
$\sqrt{2/10}=1/\sqrt5$.
\end{proof}
\section{Proof of Theorem~\ref{thm:plane-nine-points}}\label{sec:proof-theorem-two}
Write $S=\mathbb F_0$. Throughout this section we use the
coordinates $x'=x/u^{1/2}$ and $z'=u^{3/2}/z$, where $x,z$
on the right are the coordinates in~\eqref{eq:action}.
Dropping the primes, the generators of $G$ become
\[
r(x,z)=(\zeta x,\zeta^2z),\qquad s(x,z)=(1/x,1/z).
\]
This change of coordinates conjugates the original action and
preserves $\OO_S(1,1)$, so Theorem~\ref{thm:general-points}
applies to this action as well.

\subsection{The quotient surface}
Let $q\colon S\to Y=S/G$ be the quotient.
We first descend the polarization to $Y$, then identify its
pullback on a plane model of the minimal resolution and pass
from the resulting special configuration to very general points.

\begin{corollary}\label{thm:quotient}
The divisor $-5K_S$ descends to an ample Cartier divisor $A$ on
$Y$ with $A^2=20$ and $\eps(A;y)=2\sqrt5$ at a very general
smooth point $y\in Y$.
\end{corollary}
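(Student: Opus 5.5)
The corollary follows by combining Theorem~\ref{thm:general-points} with the standard comparison between Seshadri constants on $S$ and on the quotient $Y$. There are three steps: descend $-5K_S$ to $Y$, compute $A^2$, and transfer the Seshadri constant.

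\emph{Descent.} Note $-K_S=\OO_S(2,2)$, so $-5K_S=\OO_S(10,10)$. I would take $A$ to be the divisor of a $G$-invariant section. The anticanonical bundle carries its natural $G$-linearization. A Cartier divisor on $Y$ pulls back to $\OO_S(10,10)$ exactly when this linearized bundle descends. By Kempf's descent lemma, that happens when every stabilizer acts trivially on the fibres over its fixed points. The stabilizers are the cyclic subgroups $\langle r\rangle$, of order five, and $\langle r^\ell s\rangle$, of order two. At an isolated fixed point $p$ of $g$, the action on $(-K_S)_p=\wedge^2T_pS$ is multiplication by $\det(dg_p)$. For $r$ in the new coordinates, $\det dr=\zeta\cdot\zeta^2=\zeta^3$ at $(0,0)$ and at the other fixed points, a fifth root of unity; raising to the fifth power kills it. For an involution with isolated fixed points, $\det dg_p=(-1)^2=1$. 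If some involution fixes a curve, the determinant there is $-1$, and the fifth power $-1$ is not trivial. I would check this directly. For example $s$ fixes only the four points $(\pm1,\pm1)$, and similarly for $r^\ell s$, so every involution has isolated fixed points. Hence the character on the fibre of $-5K_S$ is trivial everywhere, and the bundle descends to a line bundle $A$ with $q^*A=-5K_S$. Ampleness follows because $q$ is finite and $q^*A$ is ample.

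\emph{Self-intersection.} Since $q$ is finite of degree $10$, $A^2=(q^*A)^2/10=(10,10)^2/10=200/10=20$.

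\emph{Seshadri constant.} Let $y$ be a very general point of $Y$. Then $q^{-1}(y)$ is a very general free $G$-orbit $Z$ of ten points, and $q$ is étale near $Z$.

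For the lower bound, let $C\subset Y$ be an irreducible curve through $y$. Its preimage $q^*C$ has multiplicity $\mathrm{mult}_yC$ at each of the ten points of $Z$. Theorem~\ref{thm:general-points}, applied to $\OO_S(10,10)$, gives a multipoint constant $10/\sqrt5=2\sqrt5$. Therefore
\[
q^*A\cdot q^*C\ \ge\ 2\sqrt5\cdot 10\,\mathrm{mult}_yC.
\]
The left side equals $10\,A\cdot C$, so $A\cdot C\ge 2\sqrt5\,\mathrm{mult}_yC$. This gives $\eps(A;y)\ge2\sqrt5$.

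For the upper bound, the volume bound gives $\eps(A;y)\le\sqrt{A^2}=\sqrt{20}=2\sqrt5$, so equality holds.

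The main obstacle is the descent step. It requires checking the fixed-point structure of each of the five involutions $r^\ell s$ and pinning down the correct linearization. Everything else is formal once Theorem~\ref{thm:general-points} is available.
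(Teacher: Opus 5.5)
Your proposal is correct and follows essentially the same route as the paper. You descend the naturally linearized $\OO_S(10,10)$ by checking that $\det(dg_p)^5=1$ at every fixed point: these determinants are fifth roots of unity at the corners and $\det(-\mathrm{id})=1$ at the isolated reflection points. You then compute $A^2=200/10$ and transfer the multipoint constant $2\sqrt5$ from a very general free orbit.

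The one difference is in the transfer step. The paper pulls nefness back through the finite map $\Bl_{G\cdot p}S\to\Bl_yY$ and gets equality at once, whereas you argue curve by curve with the projection formula and close with the volume bound. Both work, and passing from very general orbits to very general $y$ only needs that the finite map $q$ sends proper closed subsets to proper closed subsets.
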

\begin{proof}
The quotient $Y$ is normal and projective.
Every nonidentity rotation fixes precisely the four corners
$(0,0)$, $(0,\infty)$, $(\infty,0)$, and $(\infty,\infty)$.
Each has stabilizer $\langle r\rangle$ of order five, since
every reflection interchanges $0$ and $\infty$ in both factors.
For $k=0,\ldots,4$, the reflection $r^ks$ acts by
$(x,z)\mapsto(\zeta^k/x,\zeta^{2k}/z)$, so its fixed points are
\[
P_{k,\epsilon,\delta}
=\bigl(\epsilon\zeta^{3k},\delta\zeta^k\bigr),
\qquad \epsilon,\delta\in\{1,-1\}.
\]
These twenty torus points are distinct and have stabilizer
$\operatorname{Stab}_G(P_{k,\epsilon,\delta})=\langle r^ks\rangle$.
Indeed, a point fixed by two distinct reflections would be
fixed by their product, a nonidentity rotation, which has no
fixed points in the torus. Together with the four corners,
these are all the points with nontrivial stabilizer.

The natural action on the fiber of $\OO_S(-K_S)$ at a fixed
point is the determinant of the tangent representation.
At a corner this character has order dividing five, whereas
at a reflection point the tangent action is
$\operatorname{diag}(-1,-1)$ and its determinant is one.
Thus every stabilizer acts trivially on the fiber of the
naturally linearized bundle
$\OO_S(-5K_S)\simeq\OO_S(10,10)$.
Finite quotient descent~\cite[Chapter~1, \S3]{MumfordFogartyKirwan}
gives a Cartier divisor $A$ on $Y$ with $q^*A\sim-5K_S$.
It is ample because its pullback is ample, and
$A^2=(q^*A)^2/\deg q=200/10=20$.

Let $p\in S$ have trivial stabilizer and put $y=q(p)$.
Then $y\in Y_{\mathrm{reg}}$ and $q^{-1}(y)=G\cdot p$.
Since $q$ is \'etale near this orbit, blowing up $y$ and its
inverse image gives a finite map
$\Bl_{G\cdot p}S\to\Bl_yY$ pulling the exceptional curve
over $y$ back to the sum of the ten exceptional curves upstairs.
Nefness is equivalent to nefness of its finite pullback; hence
\[
\eps(A;y)=\eps(\OO_S(10,10);G\cdot p)=2\sqrt5
\]
for a very general free orbit, by
Theorem~\ref{thm:general-points}.
The excluded proper closed subsets of $S$ have proper closed
images under the finite quotient, giving the very general
assertion on $Y$.
\end{proof}

\subsection{The plane model and the passage to very general points}
\begin{proof}[Proof of Theorem~\ref{thm:plane-nine-points}]
Let $\nu\colon Z\to Y$ be the minimal resolution and put
$M=\nu^*A$.
The surface $Y$ has exactly six singular points.
The four corners form two orbits,
$\{(0,0),(\infty,\infty)\}$ and
$\{(0,\infty),(\infty,0)\}$.
Their tangent weights are $(1,2)$ and $(1,3)$ modulo five,
respectively; both quotient singularities have type
$\frac15(1,2)$, since $2$ and $3$ are inverse modulo five.
The twenty reflection points form four orbits of size five,
represented by $(\epsilon,\delta)$ with
$\epsilon,\delta\in\{1,-1\}$.
Their images are four $A_1$ singularities, since the stabilizer
acts on the tangent space as $\operatorname{diag}(-1,-1)$.
The action is free elsewhere, so there are no other singularities.

The invariant function $x^5+x^{-5}$ defines a morphism
$Y\to\PP^1$. Composing with $\nu$ gives
$f\colon Z\to\PP^1$, whose general fiber is $\PP^1$.
Its reducible fibers lie over $2,-2,\infty$.
Over each of $2$ and $-2$, the reduced fiber on $Y$ contains
two $A_1$ points; over infinity it contains the two cyclic
quotient points. Index the finite reducible fibers by $j=1,2$,
corresponding to $2$ and $-2$, respectively.
The fibers on $Z$ have the following chains, with components
listed in their order along each chain:
\[
\begin{array}{c|c|c}
\text{components}&\text{self-intersections}&\text{multiplicities}\\
\hline
U_j,V_j,W_j\ (j=1,2)&(-2,-1,-2)&(1,2,1)\\
G_1,G_2,T,G_3,G_4&(-3,-2,-1,-3,-2)&(1,3,5,2,1).
\end{array}
\]
Here $U_j,W_j,G_1,G_2,G_3,G_4$ are the eight exceptional
curves of $\nu$, whereas $V_1,V_2,T$ are the strict transforms
of the reduced fibers on $Y$.
The cyclic chains follow from the negative continued fraction
$5/2=[3,2]$, with opposite orientations relative to $T$ at
the two cyclic points. The components $V_j$ and $T$ have
multiplicities two and five, respectively, as seen from the
ramification indices of the map $x\mapsto x^5+x^{-5}$.
The other multiplicities and the self-intersections
$V_j^2=T^2=-1$ follow by intersecting the whole fiber with
each component.

Let $B=\{z=x^2\}\subset S$, let $\overline B=q(B)$, and
let $C\subset Z$ be its strict transform.
The curve $B$ is $G$-invariant, and its quotient map is
$x\mapsto x^5+x^{-5}$; hence $C$ is a section of $f$.
Since $q^*\overline B=B$, one has
$\overline B^{\,2}=B^2/10=2/5$.
The curve $\overline B$ passes through one $A_1$ point in each
finite reducible fiber and through the cyclic quotient point
corresponding to the orbit $\{(0,0),(\infty,\infty)\}$.
In each finite reducible fiber, label the two exceptional
components so that $C$ meets $U_j$ and is disjoint from $W_j$.
At the cyclic point, $C$ meets the $(-3)$-component $G_1$ of
the chain $G_1,G_2$. Indeed, in the resolution chart with
coordinates $v=z/x^2$, $w=x^5$, the curve $G_1$ is given by
$w=0$ and $C$ by $v=1$. Thus $C$ meets $G_1$ transversely,
away from the other fiber components. The intersections with
$U_1$ and $U_2$ are likewise transverse and occur at smooth
points of the reduced fibers.
The corrections to the self-intersection are $1/2$ at each
$A_1$ point and $2/5$ at the cyclic point. The latter is the
diagonal entry corresponding to the $(-3)$-curve in the
inverse of the negative intersection matrix of the chain
$(-3,-2)$. Therefore
\[
C^2=\frac25-\frac12-\frac12-\frac25=-1.
\]
The following schematic picture shows the three reducible fibers
and the section $C$ on $Z$, together with the map to the base $\mathbb P^1$.
\begin{center}
\begin{tikzpicture}[scale=.6,
  x=1cm,y=1cm,
  component/.style={line width=.8pt},
  section/.style={draw=blue!55!black,line width=1pt},
  every node/.style={font=\tiny}
]
\path[draw=black!50,fill=blue!2,line width=.7pt]
  (0,.6)
  -- (10,.6)
  .. controls (10.35,3) and (10.35,5.8) .. (10,8)
  -- (0,8)
  .. controls (.35,5.8) and (.35,3) .. cycle;
  \node at (.55,7.65) {$Z$};

  \foreach \x/\j in {1.65/1,4.65/2} {
    \foreach \top/\s/\name/\sq in {
      7.5/1/U/-2,5.7/-1/V/-1,3.9/1/W/-2
    } {
      \draw[component]
        ({\x-.28*\s},\top)
        .. controls ({\x+.24*\s},{\top-.75})
                and ({\x-.24*\s},{\top-1.75})
        .. ({\x+.28*\s},{\top-2.5});
      \node[right] at ({\x+.05},{\top-1.25})
        {$\name_{\j}\;{\scriptstyle(\sq)}$};
    }
  }

  \foreach \top/\s/\name/\sq/\labely in {
    7.5/1/{G_1}/-3/6.55,
    6.35/-1/{G_2}/-2/5.6,
    5.2/1/T/-1/4.45,
    4.05/-1/{G_3}/-3/3.3,
    2.9/1/{G_4}/-2/2.15
  } {
    \draw[component]
      ({7.65-.28*\s},\top)
      .. controls ({7.65+.24*\s},{\top-.45})
              and ({7.65-.24*\s},{\top-1.05})
      .. ({7.65+.28*\s},{\top-1.5});
    \node[right] at (7.7,\labely)
      {$\name\;{\scriptstyle(\sq)}$};
  }

  \draw[section]
    (.65,7.05)
    .. controls (3.3,7.25) and (6.7,6.85) .. (9.35,7.1);
  \node[above,text=blue!55!black] at (6,7.2)
    {$C\;{\scriptstyle(-1)}$};

  \draw[->,>=stealth,line width=.8pt]
    (5,.05)--(5,-1.4) node[midway,right] {$f$};
  \draw[line width=.8pt] (0,-1.65)--(10,-1.65);
  \node[right] at (10,-1.65) {$\mathbb P^1$};
  \foreach \x/\value in {1.65/2,4.65/-2,7.65/\infty} {
    \fill (\x,-1.65) circle (1.3pt);
    \node[below=4pt] at (\x,-1.65) {$\value$};
  }
\end{tikzpicture}
\end{center}
In each finite reducible fiber, contract $V_j$ and then the
image of $W_j$. In the fiber over infinity, contract
successively $T,G_2,G_3,G_4$.
Each curve has self-intersection $-1$ when it is contracted,
and these eight contractions are disjoint from the successive
images of $C$. The resulting ruled surface has a section of
square $-1$, so it is $\mathbb F_1$.
Contracting this section gives a birational morphism
$Z\to\PP^2$ whose inverse is a sequence of nine point blow-ups.
Let $h,e_1,\ldots,e_9$ be the resulting plane marking, with
$h$ the pullback of a line and each $e_i$ a total exceptional
class. The first blow-up corresponds to the section, so
$C=e_1$. Write $F$ for the class of a general fiber of $f$.
Labeling the remaining blow-ups in the reverse order of the
contractions within each fiber gives the classes of all fiber
components:
\begin{align*}
F&=h-e_1,\\
U_j&=h-e_1-e_{2j}-e_{2j+1},
&V_j&=e_{2j+1},
&W_j&=e_{2j}-e_{2j+1}\qquad(j=1,2),\\
G_1&=h-e_1-e_6-e_7-e_8,
&G_2&=e_8-e_9,
&T&=e_9,\\
G_3&=e_7-e_8-e_9,
&G_4&=e_6-e_7.
\end{align*}
In particular, the weighted sums of the components recover
the fiber class:
\[
U_j+2V_j+W_j
=G_1+3G_2+5T+2G_3+G_4=F.
\]
The curves $V_j$ and $T$ appear in this marking because they
are contracted in the passage to the ruled surface; they are
not contracted by $\nu$, so the orthogonality to $M=\nu^*A$
applies only to $U_1,W_1,U_2,W_2,G_1,G_2,G_3,G_4$.
The projection formula gives $M\cdot C=(q^*A\cdot B)/10=3$.
Moreover, $M\cdot F=10$, since the pullback to $S$ of a
general fiber on $Y$ is the sum of ten first-ruling fibers.
Together with the orthogonality of $M$ to the eight exceptional
curves of $\nu$, these equations determine its class:
\begin{equation}\label{eq:plane-degree-thirteen}
M=13h-3e_1-5(e_2+e_3+e_4+e_5)
  -4(e_6+e_7)-2(e_8+e_9).
\end{equation}
Choose $y\in Y_{\mathrm{reg}}$ with $\eps(A;y)=2\sqrt5$
and put $a=\nu^{-1}(y)$.
Let $\pi\colon\widehat Z\to Z$ be the blow-up of $a$, with
exceptional class $e_{10}$.
The class $\pi^*M-2\sqrt5\,e_{10}$ is nef, since it is the
pullback of the corresponding nef class on $\Bl_yY$.
Thus, using total pullbacks for the plane marking, the class
\[
D=13h-3e_1-5(e_2+e_3+e_4+e_5)
  -4(e_6+e_7)-2(e_8+e_9)-2\sqrt5\,e_{10}
\]
is nef on $\widehat Z$.
Consider the parameter space $\mathcal B_{10}$ of ordered
sequences of ten point blow-ups of $\PP^2$, allowing infinitely
near centers. At each stage the next center is chosen on the
universal blown-up surface. This constructs a smooth
irreducible parameter space and a smooth projective family
whose fibers carry the line class and the ten total exceptional
classes. The locus of ten distinct points of $\PP^2$ is a
dense open subset, and $\widehat Z$ is a fiber of this family.
The marked class $D$ is nef on very general fibers.
Indeed, every integral marked class defines a line bundle on
the universal family. For any such class $c$ with $D\cdot c<0$,
the locus where $c$ is effective is closed by upper
semicontinuity of $h^0$. It does not contain the fiber
$\widehat Z$, since $D$ is nef there, so it is a proper closed
subset of $\mathcal B_{10}$.
There are only countably many integral marked classes.
Outside the union of these proper closed subsets, no curve
has negative intersection with $D$, proving the assertion.
In particular, $D$ is nef on the blow-up of ten very general
distinct plane points.
Apply two quadratic transformations, each centered at three
of the first nine points with largest multiplicities, and
reorder those nine points after each step. The first nine
coefficients change as follows:
\[
(13;5^4,4^2,3,2^2)
\longmapsto(11;5,4^2,3^4,2^2)
\longmapsto(9;3^5,2^4),
\]
where superscripts denote repetitions.
These transformations lift to isomorphisms between the
corresponding blown-up surfaces and induce birational maps
of the configuration spaces. The tenth point is not a center
of either transformation, and its coefficient remains
$2\sqrt5$. Consequently the class
\[
9H-3(E_1+\cdots+E_5)-2(E_6+\cdots+E_9)
  -2\sqrt5\,E_{10}
\]
is nef on the blow-up of ten very general plane points.
Since the map forgetting the tenth point is smooth and open,
the countably many proper closed exceptional loci above
restrict to proper closed subsets of its fiber over a very
general nine-point configuration.
Fix such a configuration, set
$X=\Bl_{p_1,\ldots,p_9}\PP^2$, and let $L$ be the divisor
in the statement.
Then, for a very general point $x\in X$, the class
$\pi_x^*L-2\sqrt5\,E_x$ is nef, where
$\pi_x\colon\Bl_xX\to X$ is the blow-up at $x$ and $E_x$
is its exceptional curve.

It remains to prove that $L$ is ample.
The unique cubic through nine very general plane points is
smooth and irreducible. Its strict transform represents
$-K_X$ and has square zero; it intersects every distinct
irreducible curve nonnegatively, so $-K_X$ is nef.
Let $\eta\colon X\to S_5=\Bl_{p_1,\ldots,p_5}\PP^2$
contract $E_6,\ldots,E_9$.
The surface $S_5$ is a del Pezzo surface of degree four, and
\[
L=2(-K_X)+\eta^*(-K_{S_5}).
\]
Thus $L$ has positive intersection with every curve not
contracted by $\eta$, while $L\cdot E_i=2$ for $6\leq i\leq9$.
Since $L^2=81-5\cdot9-4\cdot4=20$, the Nakai--Moishezon
criterion proves ampleness.
The nef class constructed above gives
$\eps(L;x)\geq2\sqrt5$ at a very general point, and the
volume bound $\eps(L;x)\leq\sqrt{L^2}=2\sqrt5$ gives equality.
\end{proof}


\begin{thebibliography}{99}
\bibitem{DionneRoth}
\textsc{C. Dionne and M. Roth},
Seshadri constants on $\mathbb P^1\times\mathbb P^1$ and applications
to the symplectic packing problem,
\emph{Forum Math. Sigma} \textbf{13} (2025), e191, 42 pp.
\href{https://doi.org/10.1017/fms.2025.10137}
{\nolinkurl{doi:10.1017/fms.2025.10137}}.
\bibitem{DumnickiKuronyaMacleanSzemberg}
\textsc{M. Dumnicki, A. K\"uronya, C. Maclean, and T. Szemberg},
Rationality of Seshadri constants and the
Segre--Harbourne--Gimigliano--Hirschowitz conjecture,
\emph{Adv. Math.} \textbf{303} (2016), 1162--1170.
\href{https://doi.org/10.1016/j.aim.2016.05.025}
{\nolinkurl{doi:10.1016/j.aim.2016.05.025}}.
\bibitem{HanumanthuHarbourne}
\textsc{K. Hanumanthu and B. Harbourne},
Single point Seshadri constants on rational surfaces,
\emph{J. Algebra} \textbf{499} (2018), 37--42.
\href{https://doi.org/10.1016/j.jalgebra.2017.11.040}
{\nolinkurl{doi:10.1016/j.jalgebra.2017.11.040}}.
\bibitem{HanumanthuJacobSuhasSingh}
\textsc{K. Hanumanthu, C. J. Jacob, Suhas B. N., and A. K. Singh},
Rationality of Seshadri constants on blow-ups of ruled surfaces,
\emph{Manuscripta Math.} \textbf{177} (2026), article 32.
\href{https://doi.org/10.1007/s00229-026-01713-7}
{\nolinkurl{doi:10.1007/s00229-026-01713-7}}.
\bibitem{Lazarsfeld}
\textsc{R. Lazarsfeld},
\emph{Positivity in Algebraic Geometry I: Classical Setting:
Line Bundles and Linear Series},
Ergeb. Math. Grenzgeb. (3) \textbf{48}, Springer-Verlag, Berlin, 2004.
\href{https://doi.org/10.1007/978-3-642-18808-4}
{\nolinkurl{doi:10.1007/978-3-642-18808-4}}.
\bibitem{MumfordFogartyKirwan}
\textsc{D. Mumford, J. Fogarty, and F. Kirwan},
\emph{Geometric Invariant Theory}, third ed.,
Ergeb. Math. Grenzgeb. (2) \textbf{34}, Springer-Verlag, Berlin, 1994.
\href{https://link.springer.com/book/9783540569633}
{Springer book page}.
\bibitem{Petrakiev}
\textsc{I. Petrakiev},
Homogeneous interpolation and some continued fractions,
\emph{Trans. Amer. Math. Soc. Ser. B} \textbf{1} (2014), 23--44.
\href{https://doi.org/10.1090/S2330-0000-2014-00001-0}
{\nolinkurl{doi:10.1090/S2330-0000-2014-00001-0}}.
\end{thebibliography}
\end{document}